\newtheorem{thm}{Theorem}[section]
\newtheorem{lem}[thm]{Lemma}
\newtheorem{prop}[thm]{Proposition}
\newtheorem{cor}[thm]{Corollary}
\newtheorem{defn}{Definition}[section]
\newfont{\got}{eufm9 scaled 1095}
\def\X{\mbox{\got X}}
\def\X{\mbox{\got X}}
\def\s{\mbox{\got S}}
\newcommand{\F}{\mathcal{F}}
\newcommand{\B}{\mathcal{B}}
\newcommand{\W}{\mathcal{W}}
\newcommand{\K}{\mathcal{K}}
\newcommand{\D}{{\rm d}}
\newcommand{\tr}{{\rm tr}}
\newcommand{\pd}{\partial}
\newcommand{\ddx}{\frac{\pd}{\pd x^i}}
\newcommand{\ddy}{\frac{\pd}{\pd y^i}}
\newcommand{\ddu}{\frac{\pd}{\pd u^i}}
\newcommand{\ddv}{\frac{\pd}{\pd v^i}}
\newfont{\w}{msbm9 scaled\magstep1}
\def\R{\mbox{\w R}}
\def\Z{\mbox{\w Z}}
\newcommand{\thmref}[1]{Theorem~\ref{#1}}
\newcommand{\lemref}[1]{Lemma~\ref{#1}}
\newcommand{\prpref}[1]{Proposition~\ref{#1}}
\newcommand{\corref}[1]{Corollary~\ref{#1}}
\newcommand{\eqref}[1]{(\ref{#1})}
\newcommand{\grad}[1]{{\rm grad}(#1)}
\begin{document}

\title{ON HYPERCOMPLEX PSEUDO-HERMITIAN MANIFOLDS}

\author{KOSTADIN GRIBACHEV}
\address{Faculty of Mathematics and Informatics, University of Plovdiv, \\
236 Bulgaria Blvd., Plovdiv 4003, Bulgaria\\
E-mail: costas@uni-plovdiv.bg}

\author{MANCHO MANEV}
\address{Faculty of Mathematics and Informatics, University of Plovdiv, \\
236 Bulgaria Blvd., Plovdiv 4003, Bulgaria\\
E-mail: mmanev@uni-plovdiv.bg}

\author{STANCHO DIMIEV}
\address{Institute of Mathematics and Informatics,
Bulgarian Academy of Sciences, \\
Acad. G. Bonchev Str., Bl. 8, Sofia 1113, Bulgaria, \\
E-mail: sdimiev@math.bas.bg}

\maketitle

\abstracts{The class of the hypercomplex pseudo-Hermitian
manifolds is considered. The flatness of the considered manifolds
with the 3 parallel complex structures is proved. Conformal
transformations of the metrics are introduced. The conformal
invariance and the conformal equivalence of the basic types
manifolds are studied. A known example is characterized in
relation to the obtained results. }



\section*{Introduction}

This paper is a continuation of the same authors's paper
\cite{GriMaDi} which is inspired by the seminal work \cite{AlMa}
of D.~V.~Alekseevsky and S.~Marchiafava. We follow a parallel
direction including skew-Hermitian metrics with respect to the
almost hypercomplex structure.

In the first section we give some necessary facts concerning the
almost hypercomplex pseudo-Hermitian manifolds introduced in
\cite{GriMaDi}.

In the second one we consider the special class of (integrable)
hypercomplex pseudo-Hermitian manifolds, namely
pseudo-hyper-K\"ahler manifolds. Here we expose the proof of the
mentioned in \cite{GriMaDi} statement that each
pseudo-hyper-K\"ahler manifold is flat.

The third section is fundamental for this work. A study of the
group of conformal transformations of the metric is initiated
here. The conformal invariant classes and the conformal equivalent
class to the class of the pseudo-hyper-K\"ahler manifolds are
found.

Finally, we characterize a known example in terms of the conformal
transformations.


\section{Preliminaries}

\subsection{Hypercomplex pseudo-Hermitian structures in a real vector
space}

Let $V$ be a real $4n$-dimensional vector space. By $\left\{
\frac{\pd}{\pd x^i},\frac{\pd}{\pd y^i},\frac{\pd}{\pd
u^i},\frac{\pd}{\pd v^i} \right\}$, $i=1,2$,$\dots,n$, is denoted
a (local) basis on $V$. Each vector $x$ of $V$ is represented in
the mentioned basis as follows
   \begin{equation}\label{11}
x=x^i\ddx+y^i\ddy+u^i\ddu+v^i\ddv.
   \end{equation}

A standard complex structure on $V$ is defined as in \cite{So}:
\begin{equation}\label{12}
\begin{array}{llll}
J_1\ddx=\ddy, &J_1\ddy=-\ddx, &J_1\ddu=-\ddv, &J_1\ddv=\ddu;
\\[4pt]
J_2\ddx=\ddu, &J_2\ddy=\ddv, &J_2\ddu=-\ddx, &J_2\ddv=-\ddy;
\\[4pt]
J_3\ddx=-\ddv, &J_3\ddy=\ddu, &J_3\ddu=-\ddy, &J_3\ddv=\ddx.\\[4pt]
\end{array}
\end{equation}

The following properties about $J_\alpha$ are direct consequences
of \eqref{12}
\begin{equation}\label{13}
\begin{array} {l}
J_1^2=J_2^2=J_3^2=-Id,\\
J_1J_2=-J_2J_1=J_3,\quad J_2J_3=-J_3J_2=J_1,\quad
J_3J_1=-J_1J_3=J_2.
\end{array}
\end{equation}

If $x \in V$, i.e. $x(x^i,y^i,u^i,v^i)$ then according to
\eqref{12} and \eqref{13} we have
\begin{equation}\label{14}
J_1x(-y^i,x^i,v^i,-u^i),\quad J_2x(-u^i,-v^i,x^i,y^i),\quad
J_3x(v^i,-u^i,y^i,-x^i).
\end{equation}

\begin{defn}[\cite{AlMa}]\label{d1}
A triple $H=(J_1,J_2,J_3)$ of anticommuting complex structures on
$V$ with $J_3=J_1J_2$ is called \emph{a hypercomplex structure} on
$V$;
\end{defn}

A bilinear form $f$ on $V$ is defined as ordinary, $f:\; V \times
V \rightarrow\R$. We denote by $\B(V)$ the set of all bilinear
forms on $V$. Each $f$ is a tensor of type $(0,2)$, and $\B(V)$ is
a vector space of dimension $16n^2$.

Let $J$ be a given complex structure on $V$. A bilinear form $f$
on $V$ is called \emph{Hermitian} (respectively,
\emph{skew-Hermitian}) with respect to $J$ if the identity
$f(Jx,Jy)=f(x,y)$ (respectively, $f(Jx,Jy)=-f(x,y)$ holds true.

\begin{defn}[\cite{AlMa}]\label{d2}
A bilinear form $f$ on $V$ is called \emph{an Hermitian bilinear
form} with respect to  $H=(J_\alpha)$ if it is Hermitian with
respect to any complex structure $J_\alpha, \alpha=1,2,3$, i.e.
\begin{equation}\label{15}
f(J_\alpha x,J_\alpha y)=f(x,y)\qquad \forall\; x, y \in V.
\end{equation}
\end{defn}

We denote by $L_0=\B_H(V)$ the set of all Hermitian bilinear forms
on $V$. The  notion of pseudo-Hermitian bilinear forms is
introduced by the following
\begin{defn}[\cite{GriMaDi}]\label{d3}
A bilinear form $f$ on $V$ is called \emph{a pseudo-Hermitian
bilinear form with respect to} $H=(J_1,J_2,J_3)$, if it is
Hermitian with respect to $J_\alpha$ and skew-Hermitian with
respect to $J_\beta$ and $J_\gamma$, i.e.
\begin{equation}\label{16}
    f(J_\alpha x,J_\alpha y)=-f(J_\beta x,J_\beta y)=-f(J_\gamma x,J_\gamma y)=f(x,y)\quad \forall\; x, y \in
    V,
\end{equation}
where $(\alpha,\beta,\gamma)$ is a circular permutation of
$(1,2,3)$.
\end{defn}

We denote $f\in L_\alpha \subset \B(V)$ $(\alpha=0,1,2,3)$ when
$f$ satisfies the conditions \eqref{15} and \eqref{16},
respectively.

In \cite{AlMa} is introduced a pseudo-Euclidian metric $g$ with
signature $(2n,2n)$ as follows
\begin{equation}\label{115}
g(x,y):=\sum_{i=1}^n \left(-x^ia^i-y^ib^i+u^ic^i+v^id^i\right),
\end{equation}
where $x(x^i,y^i,u^i,v^i),\ y(a^i,b^i,c^i,d^i) \in V,\
i=1,2,\dots,n$. This metric satisfies the following properties
\begin{equation}\label{117}
g(J_1x,J_1y)=-g(J_2x,J_2y)=-g(J_3x,J_3y)=g(x,y).
\end{equation}
This means that the pseudo-Euclidean metric $g$ belongs to $L_1$.

The form $g_1: g_1(x,y)=g(J_1x,y)$ coincides with the K\"ahler
form $\Phi$ which is Hermitian with respect to $J_\alpha$, i.e.
\[
\Phi(J_\alpha x,J_\alpha y)=\Phi(x,y),\quad \alpha=1,2,3,\qquad
\Phi\in L_0.
\]

The attached to $g$ associated bilinear forms $g_2:
g_2(x,y)=g(J_2x,y)$ and $g_3: g_3(x,y)=g(J_3x,y)$ are symmetric
forms with the properties
\begin{equation}\label{120}
\begin{array}{l}
-g_2(J_1x,J_1y)=-g_2(J_2x,J_2y)=g_2(J_3x,J_3y)=g_2(x,y),\\
-g_3(J_1x,J_1y)=g_3(J_2x,J_2y)=-g_3(J_3x,J_3y)=g_3(x,y),
\end{array}
\end{equation}
i.e. $g_2 \in L_3$, $g_3 \in L_2$.

It follows that the K\"ahler form $\Phi$ is Hermitian regarding
$H$ and the metrics $g, g_2, g_3$ are pseudo-Hermitian  of
different types with signature $(2n,2n)$.

Now we recall the following notion:
\begin{defn}[\cite{GriMaDi}]\label{d14}
The structure $(H,G):=(J_1, J_2, J_3,g,\Phi,g_2,g_3)$ is called
\emph{a hypercomplex pseudo-Hermitian structure} on $V$.
\end{defn}


\subsection{Structural tensors on an almost $(H,G)$-manifold}

Let $(M,H)$ be an almost hypercomplex manifold \cite{AlMa}. We
suppose that $g$ is a symmetric tensor field of type $(0,2)$. If
it induces a pseudo-Hermitian inner product in $T_pM$, $p \in M$,
then $g$ is called \emph{a pseudo-Hermitian metric on $M$}. The
structure $(H,G):=(J_1,J_2,J_3,g,\Phi,g_2,g_3)$ is called \emph{an
almost hypercomplex pseudo-Hermitian structure on $M$} or in short
\emph{an almost $(H,G)$-structure on $M$}. The manifold $M$
equipped with $H$ and $G$, i.e. $(M,H,G)$, is called \emph{an
almost hypercomplex pseudo-Hermitian manifold}, or in short
\emph{an almost $(H,G)$-manifold}. \cite{GriMaDi}

The 3 tensors of type $(0,3)$ $F_\alpha: F_\alpha (x,y,z)=g\bigl(
\left( \nabla_x J_\alpha \right)y,z\bigr), \alpha=1,2,3$, where
$\nabla$ is the Levi-Civita connection generated by $g$, is called
\emph{structural tensors of the almost} $(H,G)$-\emph{manifold}.
\cite{GriMaDi}

The structural tensors satisfy the following
properties:
\begin{equation}\label{32}
\begin{array}{l}
    F_1(x,y,z)=F_2(x,J_3y,z)+F_3(x,y,J_2z),\\
    F_2(x,y,z)=F_3(x,J_1y,z)+F_1(x,y,J_3z),\\
    F_3(x,y,z)=F_1(x,J_2y,z)-F_2(x,y,J_1z);
\end{array}
\end{equation}
\begin{equation}\label{34}
\begin{array}{l}
    F_1(x,y,z)=-F_1(x,z,y)=-F_1(x,J_1y,J_1z),\\
    F_2(x,y,z)=F_2(x,z,y)=F_2(x,J_2y,J_2z),\\
    F_3(x,y,z)=F_3(x,z,y)=F_3(x,J_3y,J_3z).
\end{array}
\end{equation}

Let us recall the Nijenhuis tensors $N_\alpha(X,Y)
=\frac{1}{2}\left[\left[J_\alpha,J_\alpha\right]\right](X,Y)$ for
almost complex structures $J_\alpha$ and $X,Y \in \X(M)$, where
\[
\left[\left[J_\alpha,J_\alpha\right]\right](X,Y)=
2\bigl\{\left[J_\alpha X,J_\alpha Y \right]
    -J_\alpha\left[J_\alpha X,Y \right]
    -J_\alpha\left[X,J_\alpha Y \right]
    -\left[X,Y \right]\bigr\}.
\]

It is well known that the almost hypercomplex structure
$H=(J_\alpha)$ is a hypercomplex structure if
$\left[\left[J_\alpha,J_\alpha\right]\right]$ vanishes for each
$\alpha=1,2,3$. Moreover it is known that one almost hypercomplex
structure $H$ is hypercomplex if and only if two of the structures
$J_\alpha$ $(\alpha=1,2,3)$ are integrable. This means that two of
the tensors $N_\alpha$ vanish. \cite{AlMa}

We recall also the following definitions. Since $g$ is Hermitian
metric with respect to $J_1$, according to \cite{GrHe} the class
$\W_4$ is a subclass of the class of Hermitian manifolds. If
$(H,G)$-manifold belongs to $\W_4$, with respect to $J_1$, then
the almost complex structure $J_1$ is integrable and
\begin{equation}\label{35}
\begin{array}{l}
F_1(x,y,z)=\frac{1}{2(2n-1)} \left[
g(x,y)\theta_1(z)-g(x,z)\theta_1(y)\right.\\
\qquad\qquad\qquad\qquad\quad\left.-g(x,J_1y)\theta_1(J_1z)+g(x,J_1z)\theta_1(J_1y)
\right],
\end{array}
\end{equation}
where $\theta_1(\cdot)=g^{ij}F_1(e_i,e_j,\cdot)=\delta\Phi(\cdot)$
for the basis $\{e_i\}_{i=1}^{4n}$, and $\delta$ -- the
coderivative.

On other side the metric $g$ is a skew-Hermitian with respect to
$J_2$ and $J_3$, i.e. $g(J_2x,J_2y)=g(J_3x,J_3y)=-g(x,y)$. A
classification of all almost complex manifolds with skew-Hermitian
metric (Norden metric or B-metric) is given in \cite{GaBo}. One of
the basic classes of integrable almost complex manifolds
 with skew-Hermitian metric is \(\W_1\). It is known that if an almost $(H,G)$-manifold belongs to
 \(\W_1(J_\alpha), \alpha=2,3,\) then $J_\alpha$ is integrable and the following equality holds
\begin{equation}\label{36}
\begin{array}{l}
F_\alpha(x,y,z)=\frac{1}{4n} \left[
g(x,y)\theta_\alpha(z)+g(x,z)\theta_\alpha(y)\right.\\
\qquad\qquad\qquad\quad\left.   +g(x,J_\alpha
y)\theta_\alpha(J_\alpha z) +g(x,J_\alpha z)\theta_\alpha(J_\alpha
y)\right],
\end{array}
\end{equation}
where \(\theta_\alpha(z)=g^{ij}F_\alpha (e_i,e_j,z)\), \(\alpha
=2,3\), for an arbitrary basis \(\{e_i\}_{i=1}^{4n}\).

When \eqref{35} is satisfied for $(M,H,G)$, we say that $(M,H,G)
\in \W(J_1)$. In the case, $(M,H,G)$ satisfies \eqref{36} for
$\alpha =2$ or $\alpha =3$, we say $(M,H,G) \in \W(J_2)$ or
$(M,H,G) \in \W(J_3)$. Let us denote the class
$\W:=\bigcap_{\alpha=1}^3\W(J_\alpha)$.

The next theorem gives a sufficient condition an almost
$(H,G)$-manifold to be integrable.
\begin{thm}[\cite{GriMaDi}] \label{t31}
Let $(M,H,G)$ belongs to the class $ \W(J_\alpha) \bigcap
\W(J_\beta)$. Then $(M,H,G)$ is of class $\W(J_\gamma)$ for all
cyclic permutations $(\alpha, \beta, \gamma)$ of $(1,2,3)$.
\end{thm}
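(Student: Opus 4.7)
The plan is to exploit the identities \eqref{32}, which express each structural tensor $F_\gamma$ as a $J$-twisted linear combination of the other two, together with the closed-form expressions \eqref{35} and \eqref{36} available under the hypothesis $(M,H,G)\in\W(J_\alpha)\cap\W(J_\beta)$. Substituting those expressions on the right-hand side of the appropriate line of \eqref{32} and simplifying by means of the quaternion relations \eqref{13}, the metric identities \eqref{117}, and the symmetries \eqref{34} should collapse the result into a formula for $F_\gamma$ of exactly the form required for membership in $\W(J_\gamma)$, i.e.~\eqref{35} if $\gamma=1$ and \eqref{36} if $\gamma\in\{2,3\}$.

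Concretely, I would split the argument into the three cyclic cases. For $(\alpha,\beta,\gamma)=(2,3,1)$, apply $F_1(x,y,z)=F_2(x,J_3 y,z)+F_3(x,y,J_2 z)$ and insert \eqref{36} for both $F_2$ and $F_3$; for $(\alpha,\beta,\gamma)=(3,1,2)$ and $(1,2,3)$, use the remaining lines of \eqref{32}, inserting \eqref{35} for the Hermitian direction and \eqref{36} for the skew-Hermitian one. In every case the inner $J$-twists $J_\beta\circ J_\gamma$ etc.~reduce by \eqref{13} to a single complex structure, and \eqref{117} repackages the resulting four-term bracket into the bracket shape of \eqref{35} or \eqref{36}; the Lee form $\theta_\gamma$ emerges as an explicit $g$-trace combination of $\theta_\alpha\circ J_\mu$ and $\theta_\beta\circ J_\nu$ for appropriate $\mu,\nu$. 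Integrability of $J_\gamma$ comes for free: the hypotheses force $J_\alpha$ and $J_\beta$ to be integrable, and the standard fact from \cite{AlMa} recalled in the preliminaries then yields integrability of the third structure.

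The main obstacle I expect is a bookkeeping one rather than a conceptual one: the two coefficients $\frac{1}{2(2n-1)}$ in \eqref{35} and $\frac{1}{4n}$ in \eqref{36} are different, so whenever the target class is $\W(J_1)$ one has to verify that the coefficient of the assembled expression really is $\frac{1}{2(2n-1)}$ and not $\frac{1}{4n}$. The way I would confirm this is to trace the assembled right-hand side with $g^{ij}$, replace the inner traces by $\theta_\alpha$ and $\theta_\beta$, and use the definition $\theta_1=\delta\Phi$ to pin down $\theta_1$ as a specific combination of $\theta_2\circ J_3$ and $\theta_3\circ J_2$; self-consistency of this tracing then forces the numerical coefficient into the correct shape. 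Once this bookkeeping is in place, the remaining symbolic manipulations are straightforward applications of \eqref{13}, \eqref{117} and \eqref{34}.
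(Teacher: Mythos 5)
First, a point of comparison: the paper itself does not prove Theorem~\ref{t31}. It is stated with the attribution to \cite{GriMaDi} and imported without argument, so there is no in-paper proof to measure yours against. Judged on its own, your strategy --- substitute the closed forms \eqref{35}/\eqref{36} for $F_\alpha$ and $F_\beta$ into the relevant line of \eqref{32} and reduce using \eqref{13}, \eqref{117} and \eqref{34} --- is the natural one and can be completed; and the two-out-of-three integrability fact from \cite{AlMa}, recalled in the preliminaries, does dispose of integrability exactly as you say.

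The one step you understate is this: after substitution, the right-hand side of \eqref{32} does \emph{not} collapse into the shape \eqref{35} (resp.\ \eqref{36}) by pure algebra. Take $(\alpha,\beta,\gamma)=(2,3,1)$: inserting \eqref{36} into $F_1(x,y,z)=F_2(x,J_3y,z)+F_3(x,y,J_2z)$ and reducing the products of complex structures by \eqref{13} leaves, besides terms of the pattern of \eqref{35}, the cross terms $g(x,J_3y)\left[\theta_2(z)-\theta_3(J_1z)\right]$ and $g(x,J_2z)\left[\theta_2(J_1y)+\theta_3(y)\right]$, which have no counterpart in \eqref{35}. They disappear only if you first establish the compatibility relation $\theta_2=\theta_3\circ J_1$ (equivalently $\theta_2\circ J_2=\theta_3\circ J_3$) between the two \emph{given} Lee forms. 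That is an additional identity you must extract --- for instance by imposing the universal antisymmetry $F_1(x,y,z)=-F_1(x,z,y)$ from \eqref{34} on the substituted expression and invoking the linear independence of the bilinear forms $g(\cdot,J_\mu\cdot)$, or by tracing \eqref{32} in several slot pairs. Only after that cancellation does the remainder match \eqref{35}, with $\theta_1\circ J_1=-\frac{2n-1}{2n}\,\theta_2\circ J_2$, which is precisely the relation \eqref{37}. Your ``bookkeeping'' paragraph covers identifying $\theta_\gamma$ and checking its coefficient, but not this prior cancellation, which is the actual crux; an analogous relation between $\theta_\alpha$ and $\theta_\beta$ is needed in each of the three cyclic cases. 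Once you add that step, the proof closes.
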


Let us remark that necessary and sufficient conditions $(M,H,G)$
to be in $\W$ are
\begin{equation}\label{37}
\theta_\alpha\circ J_\alpha =-\frac{2n}{2n-1}\theta_1\circ
J_1,\qquad \alpha = 2,3.
\end{equation}


\section{Pseudo-hyper-K\"ahler manifolds}
\begin{defn}[\cite{GriMaDi}]
A pseudo-Hermitian manifold is called a
\emph{pseudo-hyper-K\"ahler manifold}, if $\nabla J_\alpha=0\;
(\alpha=1,2,3)$ with respect to the Levi-Civita connection
generated by $g$.
\end{defn}

It is clear, then $F_\alpha=0\, (\alpha=1,2,3)$ holds or the
manifold is K\"ahlerian with respect to $J_\alpha$, i.e. $(M,H,G)
\in \K(J_\alpha)$.

Immediately we obtain that if $(M,H,G)$ belongs to
$\K(J_\alpha)\bigcap \W(J_\beta)$ then $(M,H,G)\in\K(J_\gamma)$
for all cyclic permutations $(\alpha, \beta, \gamma)$ of
$(1,2,3)$.

Then the following sufficient condition for a $\K$-manifold is
valid.
\begin{thm}[\cite{GriMaDi}]\label{t33}
If $(M,H,G) \in \K(J_\alpha)\bigcap \W(J_\beta)$ then $M$ is a
pseudo-hyper-K\"ahler manifold $(\alpha\neq\beta \in \{1,2,3\})$.
\end{thm}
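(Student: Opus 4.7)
The strategy is to reduce the claim to the observation stated immediately before the theorem, namely that $(M,H,G) \in \K(J_\alpha) \cap \W(J_\beta)$ already implies $(M,H,G) \in \K(J_\gamma)$, where $\gamma$ denotes the third index. Applying this observation to the hypotheses of \thmref{t33} yields $F_\alpha \equiv 0$ and $F_\gamma \equiv 0$ on $M$. The goal then reduces to establishing $F_\beta \equiv 0$ as well, after which $\nabla J_1 = \nabla J_2 = \nabla J_3 = 0$ and $M$ is pseudo-hyper-K\"ahler by definition.

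To deduce $F_\beta \equiv 0$, I would invoke the system of identities \eqref{32}. Each of those three identities expresses one structural tensor as a sum of $J$-twisted values of the other two. I would pick, in each case, the identity whose left-hand side is $F_\beta$: its right-hand side is a linear combination of $F_\alpha$ and $F_\gamma$ evaluated on arguments transformed by appropriate $J_\delta$'s. Since both $F_\alpha$ and $F_\gamma$ are already known to vanish, the right-hand side is identically zero, and therefore $F_\beta \equiv 0$. For each of the six ordered pairs $(\alpha,\beta)$ with $\alpha\neq\beta$, exactly one of the three equations of \eqref{32} plays this role, so the verification amounts to a short case check.

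\textbf{Main obstacle.} There is essentially no analytic hurdle: the heavy lifting has already been absorbed into \thmref{t31} and the remark preceding the statement. What remains is combinatorial bookkeeping---matching each ordered pair $(\alpha,\beta)$ to the identity in \eqref{32} whose left-hand side is $F_\beta$---and the routine observation that a sum of zeros is zero. Consequently the proof condenses into two citations (the preceding remark and the identities \eqref{32}) followed by an appeal to the defining condition $\nabla J_\alpha=0$ of a pseudo-hyper-K\"ahler manifold.
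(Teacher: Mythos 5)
Your argument is correct, and it follows the route the paper itself intends: the paper gives no explicit proof of \thmref{t33}, presenting it as an immediate consequence of the remark just above it (that $\K(J_\alpha)\cap\W(J_\beta)\subset\K(J_\gamma)$). Where you close the argument by feeding $F_\alpha=F_\gamma=0$ into the linear identities \eqref{32} to force $F_\beta=0$, the paper's implicit alternative is to apply the same remark a second time with permuted indices (using $\K(J_\gamma)\subset\W(J_\gamma)$ and $\K(J_\alpha)\subset\W(J_\alpha)$ to regenerate the hypotheses); both closures are valid and equally short. One small caveat you inherit from the paper rather than introduce: the preceding remark is stated only for cyclic permutations $(\alpha,\beta,\gamma)$ of $(1,2,3)$, so for the three non-cyclic ordered pairs $(\alpha,\beta)$ covered by the theorem one must note that the remark holds for those orderings as well before your case check over all six pairs goes through.
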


Let $(M^{4n},H,G)$ be a pseudo-hyper-K\"ahler manifold and
$\nabla$ be the Levi-Civita connection generated by $g$. The
curvature tensor seems as follows
\begin{equation}\label{41}
R(X,Y)Z=\nabla_X \nabla_Y Z - \nabla_Y \nabla_X Z -
\nabla_{\left[X,Y\right]} Z,
\end{equation}
and the corresponding tensor of type $(0,4)$ is
\begin{equation}\label{42}
R(X,Y,Z,W)=g\left(R(X,Y)Z,W\right),\quad \forall\; X,Y,Z,W \in
\X(M).
\end{equation}

\begin{lem}\label{l41}
The curvature tensor of a pseudo-hyper-K\"ahler manifold has the
following properties:
\begin{equation}\label{43}
\begin{array}{l}
R(X,Y,Z,W)=R(X,Y,J_1Z,J_1W)=R(J_1X,J_1Y,Z,W)\\
\qquad\qquad\qquad\; =-R(X,Y,J_2Z,J_2W)=-R(J_2X,J_2Y,Z,W)\\
\qquad\qquad\qquad\; =-R(X,Y,J_3Z,J_3W)=-R(J_3X,J_3Y,Z,W),
\end{array}
\end{equation}
\begin{equation}\label{44}
\begin{array}{l}
R(X,Y,Z,W)=R(X,J_1Y,J_1Z,W)\\
\qquad\qquad\qquad\;=-R(X,J_2Y,J_2Z,W)=-R(X,J_3Y,J_3Z,W).
\end{array}
\end{equation}
\end{lem}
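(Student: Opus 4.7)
The plan is to work from two inputs: the parallelism $\nabla J_\alpha=0$ and the metric property \eqref{117}. From $\nabla J_\alpha=0$ the standard Ricci identity gives the operator commutation $R(X,Y)J_\alpha=J_\alpha R(X,Y)$, which will drive every step below.

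To prove \eqref{43} I would unwind
\[
R(X,Y,J_\alpha Z,J_\alpha W)=g\bigl(R(X,Y)J_\alpha Z,J_\alpha W\bigr)=g\bigl(J_\alpha R(X,Y)Z,J_\alpha W\bigr)
\]
and apply \eqref{117} to read off the sign $\epsilon_\alpha$, which equals $+1$ for $\alpha=1$ and $-1$ for $\alpha=2,3$. The ``first-pair'' versions $R(J_\alpha X,J_\alpha Y,Z,W)=\epsilon_\alpha R(X,Y,Z,W)$ then follow at once from the pair-interchange symmetry $R(X,Y,Z,W)=R(Z,W,X,Y)$.

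For \eqref{44} the two $J_\alpha$'s straddle the two pairs, so \eqref{117} does not apply directly. Set $V_\alpha(X,Y,Z,W):=R(X,J_\alpha Y,J_\alpha Z,W)$; I would then collect three facts. First, the commutation identity together with the relation $g(J_\alpha A,B)=\delta_\alpha g(A,J_\alpha B)$ (a consequence of \eqref{117}, with $\delta_1=-1$ and $\delta_2=\delta_3=+1$) yields the transfer rule $R(X,Y,J_\alpha Z,W)=\delta_\alpha R(X,Y,Z,J_\alpha W)$; combined with the antisymmetries of $R$, this forces $V_\alpha$ to be antisymmetric in $(X,Y)$ and in $(Z,W)$ whenever $\alpha=2,3$. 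Second, the first Bianchi identity applied to $(X,J_\alpha Y,J_\alpha Z)$ with $W$ fixed, combined with \eqref{43}, yields
\[
V_\alpha(X,Y,Z,W)-V_\alpha(X,Z,Y,W)=-\epsilon_\alpha R(Y,Z,X,W).
\]
Third, matching this against the ordinary Bianchi identity for $R$ shows that the defect tensor $S_\alpha:=V_\alpha-\epsilon_\alpha R$ is \emph{symmetric} in $(Y,Z)$.

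For $\alpha=2,3$, the tensor $S_\alpha$ is thus antisymmetric in $(X,Y)$ and symmetric in $(Y,Z)$; the standard observation that a tensor antisymmetric in one pair and symmetric in an overlapping pair must vanish gives $S_\alpha\equiv 0$, proving \eqref{44} for these $\alpha$. The case $\alpha=1$ then follows from the quaternion identity $J_1=J_2J_3$ by applying the just-proved \eqref{44} twice in succession:
\[
R(X,J_1Y,J_1Z,W)=R(X,J_2J_3Y,J_2J_3Z,W)=-R(X,J_3Y,J_3Z,W)=R(X,Y,Z,W).
\]

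The principal obstacle is the mixed-pair identity \eqref{44}: Bianchi together with \eqref{43} alone supplies only one linear relation in the two unknowns $V_\alpha(X,Y,Z,W)$ and $V_\alpha(X,Z,Y,W)$, so the argument cannot close for a single $J_\alpha$ in isolation. The rescue is to use the skew-Hermitian branch $\delta_\alpha=+1$ of \eqref{117} to endow $V_\alpha$ with genuine pair antisymmetries for $\alpha=2,3$, and then to deduce the $\alpha=1$ case via the quaternion multiplication.
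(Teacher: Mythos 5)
Your proof is correct, and its skeleton coincides with the paper's: \eqref{43} follows from $\nabla J_\alpha=0$ (giving $R(X,Y)J_\alpha=J_\alpha R(X,Y)$) together with \eqref{117} and the pair-interchange symmetry; \eqref{44} is first established for $\alpha=2,3$ by combining the first Bianchi identity with \eqref{43}; and the case $\alpha=1$ is deduced from $J_1=J_2J_3$ exactly as in the paper. Where you genuinely differ is in how the Bianchi step is closed. The paper substitutes $Y\mapsto J_2Y$, $W\mapsto J_2W$ into the cyclic sum $\s_{X,Y,Z}R(X,Y,J_2Z,J_2W)=0$, interchanges $Y$ and $Z$, adds the two resulting identities \eqref{i} and \eqref{ii}, and applies the Bianchi identity twice more to arrive at $-2R(X,J_2Y,J_2Z,W)-2R(X,Y,Z,W)=0$. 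You instead isolate the defect tensor $S_\alpha=V_\alpha-\epsilon_\alpha R$, note that the transfer rule (which carries sign $\delta_\alpha=+1$ precisely because $g$ is skew-Hermitian with respect to $J_2,J_3$) makes $S_\alpha$ antisymmetric in its first two slots, that a single application of Bianchi plus \eqref{43} makes it symmetric in the second and third, and then invoke the standard fact that a tensor antisymmetric in one pair and symmetric in an overlapping pair vanishes. This buys a shorter and more conceptual derivation, and it makes explicit why the argument cannot be run for $\alpha=1$ in isolation (there $\delta_1=-1$, so $V_1$ is symmetric rather than antisymmetric in the first pair) --- a point the paper leaves implicit by simply routing $\alpha=1$ through the quaternion relation. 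In content the two proofs are equivalent; all of your intermediate claims (the sign computations for $\epsilon_\alpha$ and $\delta_\alpha$, the antisymmetries of $V_\alpha$, and the symmetry of $S_\alpha$ in $(Y,Z)$) check out.
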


\begin{proof}
The equality \eqref{43} is valid, because of \eqref{41},
\eqref{42}, the condition $\nabla J_\alpha=0$ $(\alpha=1,2,3)$,
the equality \eqref{117} and the properties of the curvature
$(0,4)$-tensor.

To prove \eqref{44}, we  will show at first that the property
$R(X,J_2Y,J_2Z,W)=-R(X,Y,Z,W)$ holds. Indeed, from \eqref{43} we
get
\[
R(J_2X,Y,Z,W)=R(X,J_2Y,Z,W),\quad R(X,Y,J_2Z,W)=R(X,Y,Z,J_2W)
\]
and $\s_{X,Y,Z} R(X,Y,J_2Z,J_2W)=0$, where $\s_{X,Y,Z}$ denotes
the cyclic sum regarding $X, Y, Z$. In the last equality we
replace $Y$ by $J_2Y$ and $W$ by $J_2W$. We get
\begin{equation}\label{i}
-R(X,J_2Y,J_2Z,W)-R(J_2Y,Z,J_2X,W)+R(Z,X,Y,W)=0.
\end{equation}
Replacing $Y$ by $Z$, and inversely, we get
\begin{equation}\label{ii}
-R(X,J_2Z,J_2Y,W)-R(J_2Z,Y,J_2X,W)+R(Y,X,Z,W)=0.
\end{equation}
As we have
\[
-R(J_2Z,Y,J_2X,W)=-R(Z,J_2Y,J_2X,W)=R(J_2Y,Z,X,W),
\]
with the help of \eqref{i} and \eqref{ii} we obtain

\begin{equation}\label{iii}
\begin{array}{l}
-R(X,J_2Y,J_2Z,W)-R(X,J_2Z,J_2Y,W)\\
+R(Z,X,Y,W)+R(Y,X,Z,W)=0.
\end{array}
\end{equation}

According to the first Bianchi identity and \eqref{43}, we obtain
\[
\begin{array}{l}
-R(X,J_2Z,J_2Y,W)=R(J_2Z,J_2Y,X,W)+R(J_2Y,X,J_2Z,W)\\
\phantom{-R(X,J_2Z,J_2Y,W)}=-R(Z,Y,X,W)-R(X,J_2Y,J_2Z,W).
\end{array}
\]
Then the equality \eqref{iii} seem as follows
\[
-2R(X,J_2Y,J_2Z,W)+R(Z,X,Y,W)-R(X,Y,Z,W)+R(Y,Z,X,W)=0
\]
By the first Bianchi identity  the equality is transformed in the
following
\[
-2R(X,J_2Y,J_2Z,W)-2R(X,Y,Z,W)=0,
\]
which is equivalent to
\begin{equation}\label{a}
R(X,J_2Y,J_2Z,W)=-R(X,Y,Z,W).
\end{equation}
As the tensor $R$ has the same properties with respect to $J_3$,
and to $J_2$, it follows that the next equality holds, too.
\begin{equation}\label{b}
R(X,J_3Y,J_3Z,W)=-R(X,Y,Z,W).
\end{equation}
Using \eqref{a} and \eqref{b} for $J_1=J_2J_3$ we get successively
that
\[
\begin{array}{l}
R(X,Y,Z,W)=R(X,J_1Y,J_1Z,W)\\
\qquad\qquad\quad\quad\;\,=R(X,J_2(J_3Y),J_2(J_3Z),W)
=-R(X,J_3Y,J_3Z,W),
\end{array}
\]
which completes the proof of \eqref{44}.
\end{proof}

Now we will prove a theorem which gives us a geometric
characteristic of the pseudo-hyper-K\"ahler manifolds.
\begin{thm}\label{R=0}
Each pseudo-hyper-K\"ahler manifold is a flat pseudo-Riemann\-ian
manifold with signature $(2n,2n)$.
\end{thm}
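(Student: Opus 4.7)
The signature assertion $(2n,2n)$ is immediate: by construction \eqref{115} the pseudo-Hermitian metric $g$ carries that signature on every tangent space, so it is inherited by $M$ pointwise. The substantive content is therefore the flatness $R\equiv 0$. My plan is to extract from \lemref{l41} the compatibility of $R$ with each single $J_\alpha$ in each slot, and then exploit the quaternionic relation $J_1J_2=-J_2J_1=J_3$ to produce a sign contradiction that forces $R$ to vanish.

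First, I would peel off one $J_\alpha$ from each of the six identities in \eqref{43} and \eqref{44}. Writing $\epsilon_1=+1$ and $\epsilon_2=\epsilon_3=-1$, these have the uniform form $R(X,Y,J_\alpha Z,J_\alpha W)=\epsilon_\alpha R(X,Y,Z,W)$ and $R(X,J_\alpha Y,J_\alpha Z,W)=\epsilon_\alpha R(X,Y,Z,W)$. Substituting $Z\to J_\alpha Z$ and using $J_\alpha^2=-\mathrm{Id}$ from \eqref{13}, I obtain the slot-shifts
\[
R(X,Y,J_\alpha Z,W)=-\epsilon_\alpha R(X,Y,Z,J_\alpha W),\qquad R(X,J_\alpha Y,Z,W)=-\epsilon_\alpha R(X,Y,J_\alpha Z,W).
\]
Composing the two gives a ``slot~2 versus slot~4'' identity
\[
R(X,J_\alpha Y,Z,W)=\epsilon_\alpha^{2}R(X,Y,Z,J_\alpha W)=R(X,Y,Z,J_\alpha W),
\]
uniformly in $\alpha$: the signs cancel regardless of whether $g$ is Hermitian or skew-Hermitian with respect to $J_\alpha$.

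The payoff comes from applying this uniform shift to $J_3$ in two different ways. Directly with $\alpha=3$ it reads $R(X,J_3 Y,Z,W)=R(X,Y,Z,J_3 W)$. On the other hand, writing $J_3 Y=J_1(J_2 Y)$ and applying the shift first for $\alpha=1$ and then for $\alpha=2$ gives
\[
R(X,J_3 Y,Z,W)=R(X,J_2 Y,Z,J_1 W)=R(X,Y,Z,J_2 J_1 W)=-R(X,Y,Z,J_3 W),
\]
since $J_2 J_1=-J_3$ by \eqref{13}. Comparing the two expressions forces $R(X,Y,Z,J_3 W)=0$ for all $X,Y,Z,W$, and because $J_3$ is a linear isomorphism this is equivalent to $R\equiv 0$.

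The main (and really only) obstacle is careful bookkeeping of signs, so that the direct and composed applications of the slot-shift produce \emph{different} signs and not a tautology. The underlying geometric point is that the signature inhomogeneity of $g$ (Hermitian with respect to $J_1$ but skew-Hermitian with respect to $J_2,J_3$) combines with the non-commutativity $J_1 J_2=-J_2 J_1$ so that the cycle involving all three structures closes with a nontrivial overall sign, and this rigidity is precisely what forces the manifold to be flat.
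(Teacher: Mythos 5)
Your proof is correct and follows essentially the same route as the paper: both arguments reduce \lemref{l41} to the uniform slot-shift $R(X,J_\alpha Y,Z,J_\alpha W)=-R(X,Y,Z,W)$ (the paper's \eqref{45}, equivalent to your ``slot~2 versus slot~4'' identity), and then chain two of these shifts through $J_3=J_1J_2$ versus $J_2J_1=-J_3$ to force $R=-R$. The only cosmetic difference is that you decompose $J_3$ while the paper decomposes $J_1=J_2J_3$; the mechanism and the reliance on \eqref{13}, \eqref{43} and \eqref{44} are identical.
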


\begin{proof}
\lemref{l41} implies the properties
\begin{equation}\label{45}
\begin{array}{l}
-R(X,Y,Z,W)=R(X,J_1Y,Z,J_1W)\\
\qquad\qquad\qquad\quad\; =R(X,J_2Y,Z,J_2W)=R(X,J_3Y,Z,J_3W).
\end{array}
\end{equation}
As $J_1=J_2J_3$, we also have  the following
\[
\begin{array}{l}
R(X,J_1Y,Z,J_1W)=R(X,J_2(J_3Y),Z,J_2(J_3W))\\
\qquad\qquad\qquad\qquad\;\,
        =-R(X,J_3Y,Z,J_3W)=R(X,Y,Z,W).
\end{array}
\]
Comparing \eqref{45} with the last equality we receive
\[
-R(X,Y,Z,W)=R(X,J_1Y,Z,J_1W)=R(X,Y,Z,W),
\]
or $R\equiv 0$.
\end{proof}

\section{Conformal transformations of the pseudo-Hermitian
metric}

The usual conformal transformation $c: \bar g = e^{2u}g$, where
$u$ is a differential function on $M^{4n}$, is known. Since
$g_\alpha(\cdot,\cdot)=g(J_\alpha\cdot,\cdot)$, the conformal
transformation of $g$ causes the same changes of the
pseudo-Hermitian metrics $g_2, g_3$ and the K\"ahler form
$\Phi\equiv g_1$. Then we say that it is given a conformal
transformation $c$ of $G$ to $\bar{G}$ determined by $u\in\F (M)$.
These conformal transformations form a group denoted by $C$. The
hypercomplex pseudo-Hermitian manifolds $(M,H,G)$ and
$(M,H,\bar{G})$ we call $C$-equivalent manifolds or
conformal-equivalent manifolds.

Let $\nabla$ and $\bar\nabla$ be the Levi-Civita connections
determined by the metrics $g$ and $\bar{g}$, respectively. The
known  condition for a Levi-Civita connection implies the
following relation
\begin{equation}\label{nab}
\bar\nabla_X Y=\nabla_X Y+\D{u}(X)Y+\D{u}(Y)X-g(X,Y)\grad{u}.
\end{equation}
Using \eqref{nab} and the definitions of structural tensors for
$\nabla$ and $\bar\nabla$ we obtain
\begin{equation}\label{F1}
\begin{array}{l}
\bar{F}_1(X,Y,Z)=e^{2u}\left[F_1(X,Y,Z)
-g(X,Y)\D{u}(J_1Z)+g(X,Z)\D{u}(J_1Y)\right.\\
\phantom{\bar{F}_1(X,Y,Z)=e^{2u}\left[F_1(X,Y,Z)\right.}
\left.+g(J_1X,Y)\D{u}(Z)-g(J_1X,Z)\D{u}(Y)\right],
\end{array}
\end{equation}
\begin{equation}\label{Fa}
\begin{array}{l}
\bar{F}_\alpha(X,Y,Z)=e^{2u}\left[F_\alpha(X,Y,Z)
+g(X,Y)\D{u}(J_\alpha Z)+g(X,Z)\D{u}(J_\alpha Y)\right.\\
\phantom{\bar{F}_\alpha(X,Y,Z)=e^{2u}\left[F_\alpha(X,Y,Z)\right.}
\left.-g(J_\alpha
X,Y)\D{u}(Z)-g(J_\alpha X,Z)\D{u}(Y)\right]
\end{array}
\end{equation}
for $\alpha=2,3$. The last two equalities imply the following
relations for the corresponding structural 1-forms
\begin{equation}\label{theta}
\bar\theta_1=\theta_1-2(2n-1)\D{u}\circ J_1,\qquad
\bar\theta_\alpha=\theta_\alpha+4n\D{u}\circ J_\alpha, \quad
\alpha=2,3.
\end{equation}

Let us denote the following (0,3)-tensors.
\begin{equation}\label{p1}
\begin{array}{l}
P_1(x,y,z)=F_1(x,y,z)\\
\phantom{P_1(x,y,z)=}-\frac{1}{2(2n-1)} \left[
g(x,y)\theta_1(z)-g(x,z)\theta_1(y)\right.\\
\qquad\qquad\qquad\qquad\qquad\left.-g(x,J_1y)\theta_1(J_1z)+g(x,J_1z)\theta_1(J_1y)
\right],
\end{array}
\end{equation}
\begin{equation}
\begin{array}{l}\label{Pa}
P_\alpha(x,y,z)=F_\alpha(x,y,z)\\
\phantom{P_\alpha(x,y,z)=}-\frac{1}{4n} \left[
g(x,y)\theta_\alpha(z)+g(x,z)\theta_\alpha(y)\right.\\
\qquad\qquad\qquad\qquad\left.   +g(x,J_\alpha
y)\theta_\alpha(J_\alpha z) +g(x,J_\alpha z)\theta_\alpha(J_\alpha
y)\right],\;\; \alpha=2,3.
\end{array}
\end{equation}
According to \eqref{35} and \eqref{36} it is clear that
\[
(M,H,G)\in\W(J_\alpha) \iff P_\alpha=0\quad (\alpha=1,2,3).
\]
The equalities \eqref{F1}--\eqref{theta} imply the following two
interconnections
\begin{equation}\label{PP}
\bar{P}_\alpha=e^{2u}P_\alpha,\quad\alpha=1,2,3;
\end{equation}
\begin{equation}\label{tt}
\bar{\theta}_\alpha\circ
J_\alpha+\frac{2n}{2n-1}\bar{\theta}_1\circ J_1=
\theta_\alpha\circ J_\alpha+\frac{2n}{2n-1}\theta_1 \circ
J_1,\quad\alpha=2,3.
\end{equation}

From \eqref{PP} we receive that each of 
$\W(J_\alpha)\, (\alpha=1,2,3)$ is invariant with respect to the
conformal transformations of $C$, i.e. they are $C$-invariant
classes. Having in mind also \eqref{tt}, we state the validity of
the following
\begin{thm}\label{inv}
The class $\W$ of hypercomplex pseudo-Hermit\-ian manifolds is
$C$-invariant.
\end{thm}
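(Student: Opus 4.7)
The plan is to obtain the theorem almost immediately from the transformation rules already recorded, namely \eqref{PP} together with the characterization $(M,H,G)\in\W(J_\alpha)\iff P_\alpha=0$. I would split the argument into a per-structure step and an intersection step, exactly matching the decomposition $\W=\bigcap_{\alpha=1}^{3}\W(J_\alpha)$ used to define $\W$.

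For the per-structure step, the key observation is that the conformal factor $e^{2u}$ is strictly positive on $M^{4n}$. Therefore, reading \eqref{PP} for each fixed $\alpha\in\{1,2,3\}$,
\begin{equation*}
\bar{P}_\alpha=e^{2u}P_\alpha\quad\Longrightarrow\quad \bar{P}_\alpha=0\iff P_\alpha=0,
\end{equation*}
so $(M,H,\bar{G})\in\W(J_\alpha)$ if and only if $(M,H,G)\in\W(J_\alpha)$. This already shows that each of the three classes $\W(J_1),\W(J_2),\W(J_3)$ is $C$-invariant in its own right.

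For the intersection step, I would simply note that a class defined as an intersection of $C$-invariant classes is itself $C$-invariant, since a conformal transformation $c\in C$ preserves membership in every factor simultaneously. Applied to $\W=\W(J_1)\cap\W(J_2)\cap\W(J_3)$, this yields the claim. As a sanity check, I would verify that this is consistent with the alternative characterization \eqref{37}: equation \eqref{tt} asserts that the $1$-form $\theta_\alpha\circ J_\alpha+\tfrac{2n}{2n-1}\theta_1\circ J_1$ is a $C$-invariant of the $(H,G)$-structure for $\alpha=2,3$, so the conditions \eqref{37} transfer from $(M,H,G)$ to $(M,H,\bar{G})$; combined with the $C$-invariance of $\W(J_1)$ via \eqref{PP}, this gives a second, redundant derivation of the same conclusion.

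The only conceivable obstacle is whether \eqref{PP} really holds with a pure factor $e^{2u}$ and no lower-order $\D u$ remainder; however this has already been verified in the excerpt by substituting \eqref{nab} into the definition of $F_\alpha$ to obtain \eqref{F1}--\eqref{Fa} and then into the transformation rules \eqref{theta} for the structural $1$-forms, where the $\D u$-terms in $\bar F_\alpha$ cancel exactly against the $\D u$-terms generated by $\bar\theta_\alpha$ in the definition of $\bar P_\alpha$. Once \eqref{PP} is in hand, the theorem is a one-line consequence and no further calculation is needed.
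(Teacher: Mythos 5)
Your proposal is correct and follows essentially the same route as the paper: the authors likewise deduce the $C$-invariance of each $\W(J_\alpha)$ from $\bar{P}_\alpha=e^{2u}P_\alpha$ and then invoke \eqref{tt} (equivalently, the preservation of condition \eqref{37}) to conclude for the intersection $\W$. Your explicit remarks on the positivity of $e^{2u}$ and on the redundancy of the two characterizations only make explicit what the paper leaves implicit.
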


Now we will determine the class of the (locally) $C$-equivalent
$\K$-manifolds. Let us denote the following subclass
$\W^0:=\left\{\W\;|\;d\left(\theta_1\circ J_1\right)=0\right\}$.

\begin{thm}\label{C-eq}
A hypercomplex pseudo-Hermitian manifold belongs to $\W^0$ if and
only if it is $C$-equivalent to a pseudo-hyper-K\"ahler manifold.
\end{thm}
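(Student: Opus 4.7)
The plan is to prove both implications by exploiting the transformation laws \eqref{PP} and \eqref{theta}, together with the $\W$-compatibility condition \eqref{37}. Recall that $\W^0$ is the subclass of $\W$ singled out by the extra condition $d(\theta_1\circ J_1)=0$.

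For the necessity (``$\Leftarrow$''), suppose $\bar g=e^{2u}g$ yields a pseudo-hyper-K\"ahler structure, so $\bar F_\alpha=0$ and in particular $\bar P_\alpha=0$, $\bar\theta_\alpha=0$ for $\alpha=1,2,3$. By \eqref{PP}, $P_\alpha=e^{-2u}\bar P_\alpha=0$, hence $(M,H,G)\in\W$. Then from the first formula in \eqref{theta}, $\theta_1=2(2n-1)\,\D u\circ J_1$, and composing with $J_1$ gives $\theta_1\circ J_1=-2(2n-1)\,\D u$, which is exact, so $d(\theta_1\circ J_1)=0$. Thus $(M,H,G)\in\W^0$.

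For the sufficiency (``$\Rightarrow$''), suppose $(M,H,G)\in\W^0$. Since $d(\theta_1\circ J_1)=0$, the Poincar\'e lemma provides a local function $u$ with $\D u=-\tfrac{1}{2(2n-1)}\,\theta_1\circ J_1$. With this $u$, I will check $\bar\theta_\alpha=0$ for each $\alpha$. For $\alpha=1$: composing the defining equation with $J_1$ and using $J_1^2=-Id$ yields $\D u\circ J_1=\tfrac{1}{2(2n-1)}\theta_1$, and plugging into \eqref{theta} gives $\bar\theta_1=\theta_1-\theta_1=0$. For $\alpha=2,3$: the condition $\bar\theta_\alpha=0$ is equivalent, after composing with $J_\alpha$, to $\D u=\tfrac{1}{4n}\theta_\alpha\circ J_\alpha$; and since $(M,H,G)\in\W$, the identity \eqref{37} gives $\tfrac{1}{4n}\theta_\alpha\circ J_\alpha=-\tfrac{1}{2(2n-1)}\theta_1\circ J_1=\D u$, so this is automatically satisfied. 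Hence $\bar\theta_\alpha=0$ for all $\alpha$. Combined with $\bar P_\alpha=e^{2u}P_\alpha=0$, the defining identities \eqref{35}, \eqref{36} for the transformed manifold force $\bar F_\alpha=0$, i.e.\ $(M,H,\bar G)$ is pseudo-hyper-K\"ahler.

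The main subtlety is the sufficiency direction: producing a single function $u$ that simultaneously kills all three structural $1$-forms $\bar\theta_\alpha$. The closedness condition $d(\theta_1\circ J_1)=0$ is exactly what makes $u$ exist (locally) so that $\bar\theta_1=0$, while the compatibility relations \eqref{37} built into membership in $\W$ guarantee that the very same $u$ also enforces $\bar\theta_2=0$ and $\bar\theta_3=0$. Conformal invariance of $\W$ (\thmref{inv}) then ensures the transformed manifold stays in $\W$, so vanishing of the $\bar\theta_\alpha$ promotes directly to $\bar F_\alpha=0$.
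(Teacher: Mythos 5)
Your proof is correct and follows essentially the same route as the paper: both directions rest on the transformation laws \eqref{theta} and \eqref{PP}, the compatibility condition \eqref{37}, and the choice $\D u=-\tfrac{1}{2(2n-1)}\theta_1\circ J_1$ obtained from closedness of $\theta_1\circ J_1$. The only differences are cosmetic --- you phrase the ``$\Leftarrow$'' direction from the viewpoint of the $\W^0$-manifold rather than of the $\K$-manifold, and you write out explicitly the verification that the paper dismisses as ``immediate''.
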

\begin{proof}
Let $(M,H,G)$ be a pseudo-hyper-K\"ahler manifold, i.e.
$(M,H,G)\in\K$. Then $F_\alpha=\theta_\alpha=0$ ($\alpha=1,2,3$).
Hence \eqref{theta} has the form
\begin{equation}\label{the0}
\bar\theta_1=-2(2n-1)\D{u}\circ J_1,\qquad
\bar\theta_\alpha=4n\D{u}\circ J_\alpha,\quad \alpha=2,3.
\end{equation}
From \eqref{F1}, \eqref{Fa} and \eqref{the0} and having in mind
\eqref{35} and \eqref{36} we obtain that $(M,H,\bar{G})$ is a
$\W$-manifold. According to \eqref{the0} the 1-forms
$\bar\theta_\alpha\circ J_\alpha$ ($\alpha=1,2,3$) are closed.
Because of \eqref{37} the condition $\D(\bar\theta_1\circ J_1)=0$
is sufficient.

Conversely, let $(M,H,\bar{G})$ be a $\W$-manifold with closed
$\bar\theta_1\circ J_1$. Because of \eqref{37} the 1-forms
$\bar\theta_\alpha\circ J_\alpha$ ($\alpha=2,3$) are closed, too.
We determine the function $u$ as a solution of the differential
equation $\D{u}=-\frac{1}{2(2n-1)}\bar\theta_1\circ J_1$. Then by
an immediate verification we state that the transformation
$c^{-1}: g=e^{-2u}\bar{g}$ converts $(M,H,\bar{G})$ into
$(M,H,G)\in\K$. This completes the proof.
\end{proof}

Let us remark the following inclusions
\[
\K\subset\W^0\subset\W\subset\W(J_\alpha),\quad \alpha=1,2,3.
\]

Let $R, \rho, \tau$ and $\bar{R}, \bar\rho, \bar\tau$ be the
curvature tensors, the Ricci tensors, the scalar curvatures
corresponding to $\nabla$ and $\bar\nabla$, respectively. The
following tensor is curvature-like, i.e. it has the same
properties as $R$.
\[
\begin{array}{l}
\psi_1(S)(X,Y,Z,U)=g(Y,Z)S(X,U)-g(X,Z)S(Y,U)\\
\qquad\qquad\qquad\qquad\;+g(X,U)S(Y,Z)-g(Y,U)S(X,Z),
\end{array}
\]
where $S$ is a symmetric tensor.

Having in mind \eqref{nab} and \eqref{41}, we obtain
\begin{prop}\label{p-RbarR}
The following relations hold for the $C$-equivalent
$(H,G)$-mani\-folds
\begin{equation}\label{RbarR}
\begin{array}{c}
\bar{R}=e^{2u}\{R-\psi_1(S)\},\\[4pt]
\bar{\rho}=\rho-\tr{S}g-2(2n-1)S,\qquad
\bar\tau=e^{-2u}\{\tau-2(4n-1)\tr{S}\},
\end{array}
\end{equation}
where
\begin{equation}\label{S}
S(Y,Z)=S(Z,Y)=\left(\nabla_Y\D{u}\right)Z+\D{u}(Y)\D{u}(Z)-\frac{1}{2}\D{u}(\grad{\D{u}})g(Y,Z).
\end{equation}
\end{prop}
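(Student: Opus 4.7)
The plan is to carry out a direct conformal-change computation: substitute the relation \eqref{nab} into the curvature definition \eqref{41}, identify the extra terms as $-\psi_1(S)$, and then take successive traces to obtain the Ricci and scalar relations.

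First I would compute $\bar\nabla_X\bar\nabla_Y Z$ by applying \eqref{nab} twice. Writing $A(X,Y)=\D{u}(X)Y+\D{u}(Y)X-g(X,Y)\grad{u}$ for the correction term, this gives
\[
\bar\nabla_X\bar\nabla_Y Z=\nabla_X\nabla_Y Z+\nabla_X A(Y,Z)+A(X,\nabla_Y Z)+A(X,A(Y,Z)).
\]
After antisymmetrising in $X,Y$ and subtracting $\bar\nabla_{[X,Y]}Z=\nabla_{[X,Y]}Z+A([X,Y],Z)$, the pure $R$ piece reproduces $\nabla_X\nabla_Y Z-\nabla_Y\nabla_X Z-\nabla_{[X,Y]}Z=R(X,Y)Z$; the terms linear in $A$ collapse, using $(\nabla_X \D u)(Y)=(\nabla_Y \D u)(X)$ and the symmetry of $\nabla\D u$, to expressions involving the Hessian $(\nabla\D u)$; the quadratic terms in $A$ regroup into the $\D u\otimes \D u$ and $\D u(\grad u)$ contributions. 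Lowering an index with $\bar g=e^{2u}g$ and collecting everything, the non-$R$ part is exactly $-\psi_1(S)$ with $S$ given by \eqref{S}, yielding the first identity in \eqref{RbarR}.

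Next I would check that $S$ is symmetric: this is immediate since $\nabla\D u$ is symmetric (as $\nabla$ is torsion-free) and the other two summands in \eqref{S} are manifestly symmetric. For the Ricci tensor I would contract $\bar R(X,Y,Z,U)$ with $\bar g^{XU}=e^{-2u}g^{XU}$; the overall $e^{2u}$ cancels, the contraction of $R$ gives $\rho$, and a short computation using $g^{XU}g(X,U)=4n$ and $g^{XU}S(X,U)=\tr S$ shows that
\[
g^{XU}\psi_1(S)(X,Y,Z,U)=\tr S\cdot g(Y,Z)+(4n-2)S(Y,Z),
\]
which gives the middle relation in \eqref{RbarR}. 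A further contraction with $\bar g^{YZ}=e^{-2u}g^{YZ}$ then yields
\[
\bar\tau=e^{-2u}\bigl[\tau-4n\tr S-2(2n-1)\tr S\bigr]=e^{-2u}\{\tau-2(4n-1)\tr S\},
\]
as required.

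The main obstacle is the bookkeeping in the first step: one must track the mixed terms $\D u(X)\nabla_Y Z$, $g(X,Y)\nabla_X\grad u$, etc., recognise the symmetric combination $(\nabla_Y\D u)Z$, and verify that the quadratic pieces in $A$ produce exactly the $-\tfrac12 \D u(\grad u) g$ contribution in \eqref{S} rather than a stray term. Once the identification of the correction with $\psi_1(S)$ is secured, the Ricci and scalar formulas follow by the trace computations above; no use of the hypercomplex structure is needed in this proposition, only the conformal change of the Levi-Civita connection.
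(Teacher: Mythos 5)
Your strategy is exactly the one the paper intends: the paper's entire ``proof'' of \prpref{p-RbarR} is the remark that \eqref{RbarR} follows from \eqref{nab} and \eqref{41}, and your trace computations are correct as formal consequences of the first line of \eqref{RbarR} --- indeed $g^{XU}\psi_1(S)(X,Y,Z,U)=\tr S\,g(Y,Z)+(4n-2)S(Y,Z)$ and the further contraction produces the factor $2(4n-1)$ exactly as you say. The gap is that the only substantive step, namely that after antisymmetrising and lowering an index the non-$R$ part ``is exactly $-\psi_1(S)$ with $S$ given by \eqref{S}'', is asserted rather than carried out, and this is where all the content of the proposition lives; the Ricci and scalar identities are routine once it is secured.

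You should actually perform that regrouping, because the quadratic terms do not come out with the signs you (and the printed formula \eqref{S}) claim. Test case: take $g$ flat on $\R^{4n}$ and $u=x^1$, so that $\nabla\D u=0$, $\grad{u}=\pd/\pd x^1$ and $\D u(\grad{u})=1$. Then \eqref{S} gives $S=\D u\otimes\D u-\tfrac12g$, hence $\psi_1(S)\bigl(\tfrac{\pd}{\pd x^2},\tfrac{\pd}{\pd x^3},\tfrac{\pd}{\pd x^3},\tfrac{\pd}{\pd x^2}\bigr)=S_{22}+S_{33}=-1$ and the first line of \eqref{RbarR} predicts $\bar R\bigl(\tfrac{\pd}{\pd x^2},\tfrac{\pd}{\pd x^3},\tfrac{\pd}{\pd x^3},\tfrac{\pd}{\pd x^2}\bigr)=+e^{2x^1}$; but a direct Christoffel computation for $\bar g=e^{2x^1}g$ (using $\bar\Gamma^k_{ij}=u_i\delta^k_j+u_j\delta^k_i-u^k\delta_{ij}$) gives
\[
\bar R\left(\tfrac{\pd}{\pd x^2},\tfrac{\pd}{\pd x^3},\tfrac{\pd}{\pd x^3},\tfrac{\pd}{\pd x^2}\right)=-e^{2x^1}.
\]
Carrying your first step through to the end, the correction term is $-\psi_1(\tilde S)$ with $\tilde S=\nabla\D u-\D u\otimes\D u+\tfrac12\D u(\grad{u})\,g$, i.e.\ the last two summands of \eqref{S} must carry the opposite signs (one can also confirm this against the hyperbolic metric $x_n^{-2}\delta$, or against the classical two--dimensional relation $K_{\bar g}=e^{-2u}(K_g-\Delta u)$ together with a plane orthogonal to $\grad{u}$). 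So the plan is the right one and the same as the paper's, but the step you flag as ``bookkeeping'' is precisely the one that fails if the target \eqref{S} is taken on faith; once $S$ is replaced by $\tilde S$, your trace computations go through verbatim with $\tr S$ replaced by $\tr\tilde S$.
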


If $(M,H,G)$ is a $C$-equivalent $\W$-manifold to a $\K$-manifold,
i.e. $(M,H,G)\in \W^0$, then \prpref{p-RbarR} implies
\begin{cor}\label{R0}
A $\W^0$-mani\-fold has the following curvature characteristic
\[
R=\frac{1}{2(2n-1)}\left\{\psi_1(\rho)-\frac{\tau}{4n-1}\pi_1\right\},
\]
where
$\pi_1(X,Y,Z,U)=\frac{1}{2}\psi_1(g)=g(Y,Z)g(X,U)-g(X,Z)g(Y,U)$.
\end{cor}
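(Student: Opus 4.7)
The plan is to chain together the three results already at hand. The first step is to invoke \thmref{C-eq}: since $(M,H,G)\in\W^0$, there exists (at least locally, which is sufficient for a pointwise tensor identity) a function $u$ with $\bar g := e^{2u}g$ equipping $M$ with a pseudo-hyper-K\"ahler structure $(M,H,\bar G)$. Next, \thmref{R=0} forces $\bar R \equiv 0$, hence also $\bar\rho = 0$ and $\bar\tau = 0$.

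With these vanishings in hand, I would feed them directly into \prpref{p-RbarR}. The scalar-curvature relation $\bar\tau = e^{-2u}\{\tau - 2(4n-1)\tr S\} = 0$ pins down $\tr S = \tau/(2(4n-1))$; the Ricci relation $\bar\rho = \rho - (\tr S)g - 2(2n-1)S = 0$ then solves for $S$ as
\[
S = \frac{1}{2(2n-1)}\left\{\rho - \frac{\tau}{2(4n-1)}\,g\right\}.
\]
Finally, the curvature relation $\bar R = e^{2u}\{R - \psi_1(S)\} = 0$ gives $R = \psi_1(S)$. Substituting the formula for $S$ and using linearity of $\psi_1$ together with $\psi_1(g) = 2\pi_1$ (the normalization recorded in the statement of the corollary) should produce exactly the claimed expression.

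The whole argument is an algebraic consequence of \prpref{p-RbarR}, so the only real obstacle is notational bookkeeping: keeping track of which metric carries the factor $e^{2u}$ (and the sign of $u$, since the proof of \thmref{C-eq} writes the conformal change in the opposite direction), and verifying that the coefficients $\tfrac{1}{2(2n-1)}$ from $S$, $\tfrac{1}{2(4n-1)}$ from $\tr S$, and the factor $2$ from $\psi_1(g) = 2\pi_1$ combine to yield precisely $\tfrac{\tau}{4n-1}\pi_1$. Since the target identity is pointwise, the possibly only local nature of the $C$-equivalence furnished by \thmref{C-eq} poses no obstruction, and no curvature-tensor symmetry beyond those already encoded in $\psi_1$ needs to be re-verified.
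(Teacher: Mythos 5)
Your proposal is correct and follows exactly the paper's route: the paper derives \corref{R0} by combining \thmref{C-eq} and \thmref{R=0} to get $\bar R=\bar\rho=\bar\tau=0$ on the $C$-equivalent $\K$-manifold and then solving the relations of \prpref{p-RbarR} for $R$. Your algebra (solving for $\tr S$, then $S$, then substituting into $R=\psi_1(S)$ with $\psi_1(g)=2\pi_1$) checks out and reproduces the stated coefficients.
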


It is well known that the $C$-invariant tensor of each
pseudo-Riemannian manifold is the so-called Weil tensor $W$. From
\eqref{RbarR} we receive immediately
\begin{equation}\label{W}
\bar W=e^{2u}W, \qquad
W=R-\frac{1}{2(2n-1)}\left\{\psi_1(\rho)-\frac{\tau}{4n-1}\pi_1\right\}.
\end{equation}

Let us remark that the vanishing of the Weil tensor is a necessary
and sufficient condition a pseudo-Riemannian manifold to be
conformal equivalent to a flat manifold with dimension greater
than 3.

This is confirmed by the combining of \thmref{R=0}, \thmref{C-eq}
and \corref{R0}, i.e. $(M,H,G)\in\W^0$ iff $W=0$ on $(M,H,G)$.

Since each conformal transformation determines uniquely a
symmetric tensor $S$ by \eqref{S} then it takes an interest in the
consideration $S$ as a bilinear form on $T_pM$ belonging to each
of the components $L_\alpha$, $(\alpha=0,1,2,3)$.

Let $S\in L_0$.  In view of \eqref{15} $\tr{S}=0$ holds and
according to \eqref{RbarR} we receive $\bar\tau=e^{-2u}\tau$ and
an invariant tensor $W_0=R-\frac{1}{2(2n-1)}\psi_1(\rho)$. When
$W_0$ vanishes on $(M,H,G)$ then the curvature tensor has the form
$R=\frac{1}{2(2n-1)}\psi_1(\rho)$.

In the cases when $S\in L_\alpha$ $(\alpha=1,2,3)$ we consider
$(M,H,G)$ as an $\W^0$-manifold. Then according to \thmref{R=0}
and \thmref{C-eq} we have $\bar{R}=0$ on the $C$-equivalent
$\K$-manifold of $(M,H,G)$.

Now let $S\in L_1$.  By reason of $g\in L_1$ we have a cause for
the consideration of the possibility $S=\lambda g$. Hence
$\lambda=\frac{\tr{S}}{4n}=\frac{\tau}{8n(4n-1)}$. Then having in
mind \eqref{RbarR} $R=\frac{\tau}{4n(4n-1)}\pi_1$ holds true. From
here it is clear that if $S\in L_1$ then $(M,H,G)$ is an Einstein
manifold.

Let us consider the case when $S\in L_2$. Then according to
\eqref{16} $\tr{S}$ vanishes, and from \eqref{RbarR} $\tau$
vanishes, too. Because of $g_3\in L_2$ we consider $S=\lambda
g_3$, whence $\lambda=-\frac{\tr{(S\circ J_3)}}{4n}$. Then
\eqref{RbarR} implies $R=\frac{\tr{(S\circ J_3)}}{4n}\pi_3^{J_3}$,
where $\pi_3^{J_3}$ is the following tensor $\pi_3$ with respect
to the complex structure $J=J_3$
\[
\pi_3(X,Y,Z,U)=-\pi_1(X,Y,JZ,U)-\pi_1(X,Y,Z,JU).
\]
It is known \cite{GaBo} that $\pi_3$ is a K\"ahler curvature-like
tensor, i.e. it satisfies the property
$\pi_3(X,Y,JZ,JU)=-\pi_3(X,Y,Z,U)$. Therefore in this case $R$ is
K\"ahlerian with respect to $J_3$ and the tensor $R^{*J_3}:
R^{*J_3}(X,Y,Z,U)=R(X,Y,Z,J_3U)$ is curvature-like. Then we obtain
immediately
\[
R=\frac{\tau(R^{*J_3})}{8n(2n-1)}\pi_3^{J_3},\qquad
\rho=-\frac{\tau(R^{*J_3})}{4n}g_3.
\]
Hence if $S\in L_2$ then $(M,H,G)$ is a *-Einstein manifold with
respect to $J_3$.

By an analogous way, in the case when $S\in L_3$ we receive that
$(M,H,G)$ is a *-Einstein manifold with respect to $J_2$.


\section{A 4-dimensional pseudo-Riemannian spherical manifold with $(H,G)$-structure}

In \cite{GriMaDi} is considered a hypersurface $S_2^4$ in $\R_2^5$
by the equation
\begin{equation}\label{53}
-\left(z^1\right)^2-\left(z^2\right)^2
+\left(z^3\right)^2+\left(z^4\right)^2+\left(z^5\right)^2=1,
\end{equation}
where $Z\left(z^1,z^2,z^3,z^4,z^5\right)$ is the positional vector
of  $p\in S_2^4$.

Let $\left(u^1,u^2,u^3,u^4\right)$ be local coordinates
 of $p$ on $S_2^4$. The
hypersurface $S_2^4$ is defined by the scalar parametric
equations:
\begin{equation}\label{54}
\begin{array}{c}
z^1=\sinh u^1 \cos u^2,\quad z^2=\sinh u^1 \sin u^2,\quad z^3=\cosh u^1 \cos u^3\cos u^4, \\
z^4=\cosh u^1 \cos u^3\sin u^4,\quad z^5=\cosh u^1 \sin u^3.\\
\end{array}
\end{equation}
Further we consider the manifold on $\tilde S_2^4=S_2^4\backslash
\{(0,0,0,0,\pm 1)\}$, i.e. we omit two points for which $\{u^1\neq
0\}\cap \{u^3\neq (2k+1)\pi /2, k\in \Z\}$. The tangent space
$T_p\tilde{S}_2^4$ of $\tilde{S}_2^4$ in the point $p\in
\tilde{S}_2^4$ is determined by the vectors $ z_i=\frac{\pd Z}{\pd
u^i} (i=1,2,3,4)$. The vectors $z_i$ are linearly independent on
$\tilde S_2^4$, defined by \eqref{54}, and $T_p\tilde S_2^4$ has a
basis $(z_1,z_2,z_3,z_4)$ in every point $p\in \tilde{S}_2^4$.

The restriction of $\langle\cdot,\cdot\rangle$ from $\R_2^5$ to
$S_2^4$ is  a pseudo-Riemannian metric $g$ on $S_2^4$ with
signature $(2,2)$. The non-zero components $g_{ij}=\langle z_i,z_j
\rangle$ are
\begin{equation}\label{g}
g_{11}=-1,\; g_{22}=-\sinh ^2 u^1, \; g_{33}=\cosh ^2 u^1,\;
g_{44}=\cosh ^2 u^1\cos ^2 u^3.
\end{equation}

The hypersurface $S_2^4$ is equipped with an almost hypercomplex
structure $H=(J_\alpha ), (\alpha=1,2,3)$, where  the non-zero
components of the matrix of $J_\alpha$ with respect to the local
basis $\left\{\frac{\pd}{\pd u^i}\right\}_{i=1}^4$ are
\begin{equation}\label{J}
\begin{array}{ll}
 (J_1)_2^1=-\frac{1}{(J_1)_1^2}=-\sinh u^1,\;
 &
 (J_1)_4^3=-\frac{1}{(J_1)_3^4}=\cos u^3,\\
 (J_2)_3^1=-\frac{1}{(J_2)_1^3}=-\cosh u^1,\;
 &
 (J_2)_4^2=-\frac{1}{(J_2)_2^4}=-\coth u^1 \cos u^3,\\
 (J_3)_4^1=-\frac{1}{(J_3)_1^4}=\cosh u^1 \cos u^3,\;
 &
 (J_3)_2^3=-\frac{1}{(J_3)_3^2}=\tanh u^1.
\end{array}
\end{equation}

\begin{thm}[\cite{GriMaDi}]\label{t51}
The spherical pseudo-Riemannian 4-dimensional manifold, defined by
\eqref{54}, admits a hypercomplex
 pseudo-Hermitian structure on $\tilde{S}_2^4$, determined by \eqref{J} and \eqref{g},
 with respect to which it is of the class
$\W(J_1)$ but it does not belong to $\W$ and it has a constant
sectional curvature $k=1$.
\end{thm}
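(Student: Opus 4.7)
The plan is to verify all three assertions by direct computation in the local coordinates $(u^1,u^2,u^3,u^4)$, specialised to $n=1$.

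First I would compute the Christoffel symbols $\Gamma_{ij}^k$ of the Levi-Civita connection from the diagonal metric \eqref{g}; only those coming from differentiating $\sinh^2 u^1$, $\cosh^2 u^1$ and $\cos^2 u^3$ are nonzero. For the constant sectional curvature claim the efficient route is extrinsic: $S_2^4$ is a pseudo-Riemannian hyperquadric in $\R_2^5$ with position vector serving as a unit normal, so the Gauss formula reads $\bar\nabla_X Y=\nabla_X Y+\langle X,Y\rangle Z$ and the Gauss equation immediately yields
\begin{equation*}
R(X,Y,Z,W)=g(Y,Z)g(X,W)-g(X,Z)g(Y,W),
\end{equation*}
the standard form of constant sectional curvature $k=1$ on a pseudo-sphere of unit radius. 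A direct calculation from the $\Gamma_{ij}^k$ is an equally valid alternative.

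Next, for membership in $\W(J_1)$, I would substitute the entries of \eqref{J} and the $\Gamma_{ij}^k$ into $F_1(X,Y,Z)=g((\nabla_X J_1)Y,Z)$, then contract with $g^{ij}$ to obtain $\theta_1$. With $n=1$, equation \eqref{35} reduces to the rigid identity $F_1(x,y,z) = \frac{1}{2}\bigl\{g(x,y)\theta_1(z)-g(x,z)\theta_1(y)-g(x,J_1y)\theta_1(J_1z)+g(x,J_1z)\theta_1(J_1y)\bigr\}$, which can be checked componentwise. The integrability of $J_1$ (vanishing of $N_1$) falls out of the same computation.

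To show $(\tilde S_2^4,H,G)\notin\W$, the cleanest approach is via the remark after \thmref{t31}: $\W$-membership is equivalent to $\theta_\alpha\circ J_\alpha=-2\,\theta_1\circ J_1$ for $\alpha=2,3$ (since $2n/(2n-1)=2$). Computing $\theta_2$ and $\theta_3$ exactly as above, it suffices to exhibit a single coordinate component where one of these two identities fails. If more convenient, a shortcut is to compute the Nijenhuis tensor of $J_2$ or $J_3$ from \eqref{J} and exhibit one nonzero entry, which immediately rules out $\W(J_\alpha)$ since that class requires integrability of $J_\alpha$. The main obstacle is sheer bookkeeping: the $J_\alpha$'s mix $\sinh u^1$, $\cosh u^1$, $\tanh u^1$, $\coth u^1$ and $\cos u^3$, so the covariant derivatives $(\nabla_i J_\alpha)_j^k$ produce many nonzero components that must be organised and matched against the right-hand sides of \eqref{35} and \eqref{37}, with no conceptual difficulty beyond careful algebra.
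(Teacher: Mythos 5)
The paper contains no proof of this theorem: it is imported from \cite{GriMaDi}, and the present text only supplies the data \eqref{g}, \eqref{J} together with the single nonzero component $\theta_1\left(\frac{\partial}{\partial u^2}\right)=\frac{2\sinh^2u^1}{\cosh u^1}$ quoted after the statement. So your proposal must stand on its own. As an outline it is the natural one: the Gauss equation for the hyperquadric \eqref{53}, with the position vector as unit spacelike normal, gives $R=\pi_1$ and hence $k=1$ (the sign in your Gauss formula should be $\bar\nabla_XY=\nabla_XY-\langle X,Y\rangle Z$, but this is harmless because the second fundamental form enters the Gauss equation quadratically); and testing \eqref{35} with coefficient $\tfrac12$ and the relation \eqref{37} with $\frac{2n}{2n-1}=2$ is exactly the right way to settle membership in $\W(J_1)$ and non-membership in $\W$ --- for the latter it indeed suffices, by \thmref{t31}, to refute \eqref{37} for a single $\alpha\in\{2,3\}$.

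There are, however, two genuine gaps. First, your proposed ``shortcut'' of exhibiting a nonzero component of $N_2$ or $N_3$ contradicts the very statement you are proving: the theorem asserts that the structure is \emph{hypercomplex}, so all three $J_\alpha$ are integrable and every $N_\alpha$ vanishes; that route is a dead end, and you are forced to work with $F_2,F_3$ failing \eqref{36} (equivalently with \eqref{37}), since $\W_1(J_\alpha)$ is a proper subclass of the integrable class. Second, and relatedly, your plan never establishes the ``hypercomplex'' part of the claim at all: membership in $\W(J_1)$ yields integrability of $J_1$ only, and because the manifold is \emph{not} in $\W$ you cannot invoke \thmref{t31} to obtain the rest; the vanishing of $N_2$ (or $N_3$) must be verified separately from \eqref{J}. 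Finally, for a statement whose entire content is the outcome of a coordinate computation, the decisive data --- the Christoffel symbols, the components of $F_\alpha$ and $\theta_\alpha$, and the specific component at which \eqref{37} fails --- are not actually produced, so as written this is a programme for a proof rather than a proof.
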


Let us consider a conformal transformation determined by the
function $u$ which is a solution of the equation $\D
u=-\frac{1}{2(2n-1)}(\theta_1\circ J_1)$, where the nonzero
component of $\theta_1$ with respect to the local basis
$\left\{\frac{\pd}{\pd u^i}\right\}$ $(i=1,2,3,4)$ is
$\theta_1\left(\frac{\partial}{\partial u^2}\right)=\frac{2\sinh^2
u^1}{\cosh u^1}$.

Since $\tilde{S}_2^4$ has a constant sectional curvature then the
Weil tensor is vanishes, i.e. $\tilde{S}_2^4$ is $C$-equivalent to
a flat $\K(J_1)$-manifold. If we admit that it is in $\K$, then
according to \thmref{C-eq} we obtain that the manifold
$(\tilde{S}_2^4,H,G)\in\W$ which is a contradiction. Therefore the
considered manifold is $C$-equivalent to a flat
$\K(J_1)$-manifold, but it is not a pseudo-hyper-K\"ahler
manifold. By direct verification we state that the tensor $S$ of
this conformal transformation belongs to $L_1$. Therefore
$(\tilde{S}_2^4,H,G)$ is an Einstein manifold.

\end{document}